\newtheorem{thm}{Theorem}[section]
\newtheorem{lem}[thm]{Lemma}
\theoremstyle{definition}
\newtheorem{defn}{Definition}[section]
 \theoremstyle{remark}
\newtheorem{rmk}{Remark}
\begin{document}
\title{The isochronous center on center manifolds for three
dimensional differential systems}
\author{{ Qinlong Wang$^{1}$}\hspace{1cm}{Wentao Huang$^{2*}$}\hspace{1cm}{Chaoxiong Du$^{3}$}\\
 {\small (1\,\,School of Mathematics and
Computational Science, Guilin University} \\{\small  of
Electronic Technology, Guilin, Guangxi 541004, P.R. China)}\\
 {\small (2 Guangxi Colleges and Universities Key Laboratory of Unmanned Aerial Vehicle Telemetry,}\\{\small Guilin University of Aerospace Technology, Guilin 541004, China)} \\
{\small (3\,\,Department of Mathematics, Changsha Normal
University, Changsha,  410100, P.R. China)}}
 \footnotetext{Corresponding author: Wentao Huang (huangwentao@163.com) and Qinlong Wang (wqinlong@163.com)}
\date{}
\maketitle
\begin{center}
\begin{minipage}{130mm}
   {\small \bf Abstract.} {\small
In this paper, we give a direct method to study the isochronous centers on center manifolds of
three dimensional polynomial differential systems. Firstly, the isochronous constants of the three dimensional system are defined and its recursive formulas are given. The conditions of
the isochronous center are determined by the computation of
isochronous constants in which it doesn't need compute center manifolds of
three dimensional systems. Then the isochronous center conditions of two specific systems are discussed as
the applications of our method. The method  is
an extension and development of the formal series method for
the fine focus of planar differential systems and also readily
done with using computer algebra system
such as Mathematica or Maple.} \\
 {\small \bf Key words.}{\small\,\,three dimensional system,\,isochronous center,\,center manifold,\,
 isochronous constant}\\
  {\small \bf AMS Subject classification.}{\small\,\,34C.}
\end{minipage}
\end{center}

\section{Introduction}
\hskip\parindent
 This paper is concerned with isochronous center on center manifolds for
the following three-dimensional (3D) nonlinear dynamical systems
\begin{equation}\label{3w-1}
\begin{array}{l}
  \frac{{\rm{d}}x} {{\rm{d}}t}=  - y + \sum\limits_{k + j +
l = 2}^\infty  {A_{kjl} } x^k y^j u^l  = X(x, y, u), \\
\frac{{\rm{d}}y} {{\rm{d}}t} = x + \sum\limits_{k + j + l =
2}^\infty {B_{kjl}
x^k y^j u^l }  = Y(x, y, u),\\
 \frac{{\rm{d}}u} {{\rm{d}}t} =  - du + \sum\limits_{k + j + l = 2}^\infty
{d_{kjl} } z^k w^j u^l  = U(x, y, u)
 \end{array}
 \end{equation}
where $x, y, u, t, d, A_{kjl}, B_{kjl}, d_{kjl} \in {\mathbb R}\,
(k,j,l\in {\mathbb N})$.

When the isochronous problem is restricted to planar differential
systems, the generic form
\begin{equation}\label{S-real-0}
  \begin{array}{l}
     \frac{dx}{dt}=- y + \sum\limits_{k + j = 2}^\infty  {A_{kj} } x^k y^j= X(x, y), \\
   \frac{dy}{dt}=x + \sum\limits_{k + j =
2}^\infty {B_{kj} x^k y^j }  = Y(x,y)
  \end{array}
\end{equation}
where  $x, y, t, A_{kj}, B_{kj}\in {\mathbb R}\, (k,j\in {\mathbb
N})$, or the more frequent form in the current study is considered
as follows
\begin{equation}\label{S-real}
  \begin{array}{l}
 \frac{dx}{dt}=-y+P(x,y),\,\, \frac{dy}{dt}=x+Q(x,y)
  \end{array}
\end{equation}
where $P$ and $Q$ are polynomials in $x,y$.  As is known for the
planar system, a center  is isochronous if the periods are
constant for all periodic solutions in a neighborhood of it. For
systems (\ref{S-real}) some valuable works have been done, for
example, several cases of $P$ and $Q$ as the homogeneous
polynomials with same degree can be seen in \cite{loud} for the
quadratic, \cite{pleshkan} for the cubic, \cite{chav fourth} for
the quartic and  \cite{chav fifth} for the quintic, and more other
polynomial systems studied about that problem can also be seen in
\cite{Lloyd infity,christopher,mard}. The isochronous center of non-polynomial system can be see \cite{huang09}. Usually, we should find out
the center before isochronous center is determined, namely the
center-focus determination is firstly made by computing focus
quantities, about this there are many classic or effective methods
for the planar
 systems (\ref{S-real}), for examples, given respectively by Liu et al. \cite{liuli1990}, Gine et al. \cite{Gine}, Lloyd et al. \cite{Lloyd2002}, Yu et al. \cite{Yupei2007}, and Romanovski et al. \cite{Romanovski}.


Furthermore, in order to obtain the isochronous conditions for
systems (\ref{S-real}), the classic methods of computing period
constants or isochronous constants have been given, see e.g.
\cite{chav fourth,chav fifth,gasull}.  In general, as was pointed
in \cite{christopher},
 the computation
is very difficult.  Here we would like to mention particularly the
algorithm to compute complex period constants given by the authors
of \cite{Liu-Huang} in 2003, which is readily done with using
computer algebra system such as Mathematica or Maple,  due to its
linear recurrence without complex integrating operations and
solving equations. And more the algorithm was generalized in the
planar systems with $p:-q$ arbitrary integer resonant type in
\cite{wang} for the general isochronous problem. In this paper, we
generalize and develop further the algorithm in the 3D systems
(\ref{3w-1}) for the isochronous problem on center manifolds,
which can be applied to directly figure out the necessary
condition of isochronicity on the local center manifold without
determining center conditions, though there have also existed many
classic or effective methods about that on center manifold,
for examples, given respectively by Hassard et al. \cite{Hassard}, Edneral et al. \cite{Edneral}, Yu et al. \cite{yupei-13}, and Wang et al. \cite{Wang-2010}.

The paper is organized as follows. The preliminary Section 2
provides, among other things, the definitions of generalized
isochronous center and generalized period constants on center
manifolds. In section 3, a recursive algorithm to compute
generalized period constants of the systems (\ref{3w-1}) is given.
As the applications of the algorithm, in Section 4, using our
method, we discuss the isochronous center conditions for two
specific systems. As far as we known, it is the first time to discuss the isochronous center on center manifolds of three dimensional systems.


\section{Preliminary}
\hskip\parindent
 Let us first list some isochronous definitions and preliminary results for system
(\ref{S-real-0}), then give the definition of isochronous center
for system (\ref{3w-1}).

 By means of transformation
\begin{equation}\label{2w-t1}
\begin{array}{l}
 z=x+y{\bf{i}},\; w=x+y{\bf{i}},\; t =  -
T{\bf{i}},\; {\bf{i}} = \sqrt {- 1}
\end{array}
 \end{equation}
 system
(\ref{S-real-0}) can be transformed into

\begin{equation}\label{S-complex}
\begin{array}{l}
{{{\rm{d}}z} \over {{\rm{d}}T}} = z + \sum\limits_{k + j=
2}^\infty  {a_{kj} } z^k w^j =
Z(z, w), \\
{{{\rm{d}}w} \over {{\rm{d}}T}} =  - w - \sum\limits_{k + j=
2}^\infty {b_{kj} } w^k z^j=  -W(z, w)\;
 \end{array}
 \end{equation}
where $a_{kj}, b_{kj}\in {\mathbb C}$ and $a_{kj}=\bar b_{kj}\,
(k,j\in {\mathbb N})$, namely  system (\ref{S-complex}) is called
the concomitant system of (\ref{S-real-0}).

\begin{lem}[see \cite{Russian}]\label{lem A}
 For system
(\ref{S-complex}), we can derive uniquely the following formal
series:
\begin{equation}\label{Russian1}
\xi  = z + \sum\limits_{k + j = 2}^\infty  {c_{kj} z^k w^j } ,
\eta = w + \sum\limits_{k + j = 2}^\infty  {d_{k,j} w^k z^j } ,
\end{equation}
where $c_{k+1,k}=d_{k+1,k}=0,\: k=1,2,\cdots,$ such that
\begin{equation}\label{Russian2}
\frac{{d\xi }}{{dT}} = \xi  + \sum\limits_{j = 1}^\infty  {p_j\,
\xi ^{j + 1} \eta ^j } ,\frac{{d\eta }}{{dT}} =  - \eta  -
\sum\limits_{j = 1}^\infty  {q_j \,\eta ^{j + 1} \xi ^j }.
\end{equation}
\end{lem}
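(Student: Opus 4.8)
The plan is to prove the lemma by the classical undetermined-coefficient (Poincar\'e--Dulac) procedure: one builds the homogeneous parts of $\xi$ and $\eta$ together with the constants $p_j,q_j$ order by order, and checks at each order that every new unknown is uniquely forced once the normalization $c_{k+1,k}=d_{k+1,k}=0$ is imposed. Write $\xi=z+\sum_{m\ge 2}\xi_m(z,w)$ and $\eta=w+\sum_{m\ge 2}\eta_m(z,w)$, where $\xi_m,\eta_m$ are homogeneous of degree $m$. Substituting these series into the first identity of (\ref{Russian2}) and replacing $\frac{dz}{dT},\frac{dw}{dT}$ by $Z(z,w),-W(z,w)$ from (\ref{S-complex}), I would equate the homogeneous components of total degree $m$ on both sides.

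The key elementary computation is that, along the linear part $\frac{dz}{dT}=z$, $\frac{dw}{dT}=-w$, one has $z\,\partial_z(z^k w^j)-w\,\partial_w(z^k w^j)=(k-j)\,z^k w^j$. Hence, collecting the coefficient of $z^k w^j$ with $k+j=m$, the degree-$m$ balance of the $\xi$-equation becomes a homological equation of the form
\[
(k-j-1)\,c_{kj}=F_{kj},
\]
where $F_{kj}$ is a polynomial in the coefficients of system (\ref{S-complex}), in the coefficients of $\xi,\eta$ of order $<m$, and in the constants $p_i$ with $2i+1<m$; moreover, since $\xi^{j+1}\eta^j=z^{j+1}w^j+(\mbox{higher order})$, the constant $p_{(m-1)/2}$ enters $F_{kj}$ --- linearly, with coefficient $1$ --- precisely when $m$ is odd and $k=j+1$, and enters no equation of lower order.

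Next I would solve this homological equation. Because $k-j$ has the same parity as $k+j=m$, the factor $k-j-1$ vanishes only when $m$ is odd and $k=j+1$, i.e. for the single resonant monomial $z^{(m+1)/2}w^{(m-1)/2}$. In that case the left side is $0$: the coefficient $c_{j+1,j}$ is free and is set to $0$ by the normalization, while $p_{(m-1)/2}$ is determined uniquely by requiring $F_{kj}=0$. In every non-resonant case $k\neq j+1$ one divides by $k-j-1\neq 0$ to obtain $c_{kj}$ uniquely. The second identity of (\ref{Russian2}) is handled identically after exchanging the roles of $z$ and $w$ (equivalently, via the symmetry $z\leftrightarrow w$, $T\leftrightarrow -T$, $a_{kj}\leftrightarrow b_{kj}$), producing the $d_{kj}$ and the $q_j$, with resonant monomial $w^{j+1}z^j$ and $d_{j+1,j}=0$. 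An induction on $m$, whose base case $m=2$ and inductive step are the same computation, then yields existence and uniqueness of the formal series (\ref{Russian1}) and of the sequences $\{p_j\},\{q_j\}$.

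The only point that requires genuine care --- and hence the main obstacle --- is the bookkeeping that makes this recursion well-founded: after expanding $\xi^{j+1}\eta^j$ and $\eta^{j+1}\xi^j$, one must verify that the degree-$m$ contributions involve only coefficients of $\xi,\eta$ of order strictly less than $m$ together with constants $p_i,q_i$ already fixed at earlier (odd) orders, and that the single new constant appearing at an odd order $m$ does so with an invertible (here unit) coefficient. Once this is checked, the resonance pattern $k-j-1=0\iff(m\ \mathrm{odd},\ k=j+1)$ does all the work, and the normalization $c_{k+1,k}=d_{k+1,k}=0$ is exactly what removes the only remaining freedom in the construction.
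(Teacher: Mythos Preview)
The paper does not supply its own proof of this lemma: it is quoted from \cite{Russian} and used as a black box. Your argument is the standard Poincar\'e--Dulac homological-equation construction and is correct. The eigenvalue computation $(z\partial_z-w\partial_w)z^kw^j=(k-j)z^kw^j$, the resulting scalar equation $(k-j-1)c_{kj}=F_{kj}$, the identification of the single resonance $k=j+1$ at each odd order, and the observation that $\xi^{j+1}\eta^j=z^{j+1}w^j+\text{h.o.t.}$ so that $p_j$ first appears (with unit coefficient) exactly in the resonant equation of degree $2j+1$, are all accurate and together make the induction well-founded.

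One small comparison worth making: the paper's own computational machinery (Lemma~\ref{Theorem F} for the plane, Theorem~\ref{wql-1} for the 3D case) is a variant of this construction in which the right-hand side is written as $z\sum p'_m(zw)^m$ rather than $\xi\sum p_m(\xi\eta)^m$. That change decouples the recursion for the $c_{kj}$ from the higher parts of $\xi,\eta$ and yields explicit linear recurrence formulas. The price is that the resulting $p'_m$ agree with your $p_m$ only modulo the ideal generated by earlier ones (Lemma~\ref{Theorem G}, Theorem~\ref{relation pj p'j}). Your approach proves the normal-form lemma directly; the paper's variant is tailored for symbolic computation.
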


We write $\mu _k  = p_k  - q_k ,\tau _k =p_k + q_k ,k = 1,2,3,
\cdots $, then we have

\begin{defn}[see \cite{liuli1989}]\label{def1}For any positive integer $k$, $\mu_k$ is called $k$-th singular point quantity of
the origin of system (\ref{S-complex}) and (\ref{S-real-0}). And
the origin of system (\ref{S-complex}) or (\ref{S-real-0}) is
called center if $\mu _k  =0, k = 1,2,3, \cdots $.
\end{defn}
In fact,  let $v_{2k+1}(2\pi)$ is $k$-th focal value of the origin
of system (\ref{S-real-0}), for the above each $\mu_k$, if
$\mu_0=\mu_1=\cdots=\mu_{k-1}=0,$ then
\begin{equation}\label{}
   \mu_k={\bf{i}}\, v_{2k+1}(2\pi).
\end{equation}

\begin{defn}[see \cite{Liu-Huang}]\label{def2}For any positive integer $k$, $\tau_k$ is called $k$-th period constant of the
origin of system (\ref{S-complex}) and (\ref{S-real-0}). And the
origin of system (\ref{S-complex}) or (\ref{S-real-0}) is called
isochronous center if $\tau_k=\mu _k  =0\,(or\, p_k=q_k=0), k = 1,2,3, \cdots $,
namely the two equtions\, $ \dot \xi=\xi$\,\,{\rm and}\,$\dot
\eta=-\eta$  hold  in (\ref{Russian2}). In here, we call $ p_k,\,q_k$ $k$-th isochronous constants of the
origin of system (\ref{S-complex}) and (\ref{S-real-0}).
\end{defn}

Now we recall the algorithm to compute period constants.
\begin{lem}[see \cite{Liu-Huang}]\label{Theorem F}
 For system (\ref{S-complex}),we can derive uniquely  the following formal series:
\begin{equation}\label{}
f(z,w) = z + \sum\limits_{k + j = 2}^\infty  {c'_{kj} z^k w^j }
,\,\, g(z,w) = w + \sum\limits_{k + j = 2}^\infty  {d'_{k,j} w^k
z^j } ,
\end{equation}
where $c'_{k+1,k}=d'_{k+1,k}=0,\: k=1,2,\cdots,$ such that
\begin{equation}\label{3.3}
\frac{{df }}{{dT}} = f(z,w)  + \sum\limits_{j = 1}^\infty  {p'_j\,
z ^{j + 1} w ^j },\,\,\frac{{dg }}{{dT}} =  - g(z,w)  -
\sum\limits_{j = 1}^\infty {q'_j \,w ^{j + 1} z ^j },
\end{equation}
and when  $k-j-1 \neq 0,$ $c'_{kj}$ and $d'_{kj}$ are determined,
when  $k-j-1 = 0$, $p'_j$ and $q'_j\,(j=1,2,\cdots)$  are
determined.
\end{lem}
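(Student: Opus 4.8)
The plan is to build the coefficients $c'_{kj}$ and $d'_{kj}$, together with the numbers $p'_j$ and $q'_j$, by a single induction on the total degree $n=k+j$, and to check that at each stage the resulting equations determine these quantities uniquely once the normalizations $c'_{k+1,k}=d'_{k+1,k}=0$ are imposed. First I would substitute the ansatz $f(z,w)=z+\sum_{k+j\ge2}c'_{kj}z^kw^j$ into the first identity of (\ref{3.3}) and expand $df/dT=\frac{\partial f}{\partial z}Z(z,w)-\frac{\partial f}{\partial w}W(z,w)$ using the series for $Z$ and $-W$ from (\ref{S-complex}). Writing $Z=z+(Z-z)$ and $-W=-w-(W-w)$, the ``linear part'' of $df/dT$ is the operator $z\partial_z-w\partial_w$ applied to $f$, which sends $z^kw^j$ to $(k-j)z^kw^j$; hence, comparing the coefficient of a fixed monomial $z^kw^j$ with $k+j=n$ on the two sides of $df/dT-f=\sum_{j\ge1}p'_j z^{j+1}w^j$, the coefficient multiplying the unknown $c'_{kj}$ is exactly $k-j-1$. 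Every other contribution — whether from the term $Z-z$ itself (which supplies the prescribed datum $a_{kj}$) or from the products $\frac{\partial f}{\partial z}(Z-z)$ and $\frac{\partial f}{\partial w}(w-W)$ — involves only the structure constants $a_{\alpha\beta},b_{\alpha\beta}$ and coefficients $c'_{\alpha\beta}$ of total degree strictly less than $n$. Thus the degree-$n$ equations decouple into
\[
(k-j-1)\,c'_{kj}=-a_{kj}-G_{kj}\bigl(\{c'_{\alpha\beta}\}_{\alpha+\beta<n}\bigr),\qquad k+j=n,
\]
with the understanding that on the resonant index the left-hand side is replaced by the unknown $p'_j$.

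The key point is the parity of $k-j-1$. When $n$ is even, $k-j$ is even, so $k-j-1\ne0$ for every admissible pair and all degree-$n$ coefficients $c'_{kj}$ are uniquely solved by division; when $n$ is odd, $k-j-1=0$ happens only for the single pair $(k,j)=((n+1)/2,(n-1)/2)$, and there I would impose $c'_{(n+1)/2,(n-1)/2}=0$ and \emph{read off} $p'_{(n-1)/2}$ from the otherwise unsolvable equation, the remaining degree-$n$ coefficients being again determined by division by $k-j-1$. Carrying this induction from $n=2$ upward produces $f$ and the sequence $(p'_j)$; the construction of $g$ and $(q'_j)$ is word-for-word the same after exchanging the roles of $z$ and $w$ (and of $a_{\alpha\beta}$ with $b_{\alpha\beta}$), since the linear part of $dg/dT$ again acts on $w^kz^j$ by $(j-k)$, so the coefficient of the unknown $d'_{kj}$ in the degree-$n$ relation is $-(k-j-1)$. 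Uniqueness is automatic: at every degree the solvable coefficients are forced and the normalization pins down the resonant ones, which is exactly the ``determined'' statement of the lemma.

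Conceptually this is just a Poincar\'e--Dulac normal-form reduction for the diagonal linear part $\mathrm{diag}(1,-1)$, so no analytic obstacle arises. The step on which I expect to spend the most care is the bookkeeping showing that $G_{kj}$ genuinely depends on strictly lower-degree data only: this amounts to tracking, in $\frac{\partial f}{\partial z}(Z-z)$ and $\frac{\partial f}{\partial w}(w-W)$, that a monomial of degree $n$ can only arise from a product of a factor $c'_{\alpha\beta}$ with $\alpha+\beta\le n-1$ and a structure constant, and it is precisely this structure that makes the recursion well founded and turns it into the purely algebraic linear recurrence — with no integrations and no auxiliary equations to solve — that is advertised in the introduction.
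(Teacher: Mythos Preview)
Your argument is correct and matches the paper's approach: direct substitution of the ansatz, expansion of $df/dT-f=\frac{\partial f}{\partial z}Z-\frac{\partial f}{\partial w}W-f$, and term-by-term comparison so that the unknown $c'_{kj}$ appears with coefficient $k-j-1$ while every remaining contribution involves only coefficients of strictly lower total degree. The paper does not prove this lemma itself (it is cited from \cite{Liu-Huang}), but its proof of the three-dimensional generalization, Theorem~\ref{wql-1}, is precisely this computation with the extra variable $u$ adjoined, leading to the same recursive mechanism you describe.
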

The relations between $p_j,q_j$ and $p'_j,q'_j\,(j=1,2,\cdots)$
are the following lemma.
 \begin{lem}[see \cite{Liu-Huang}]\label{Theorem G}
Let $p_0=q_0=p'_0=q'_0=0$. If existing a positive integer $m$,
such that
\begin{equation}\label{p=q=0}
p_0=q_0=p_1=q_1=\cdots=p_{m-1}=q_{m-1}=0,
\end{equation}
then,
\begin{equation}\label{p=p'}
p'_0=q'_0=p'_1=q'_1=\cdots=p'_{m-1}=q'_{m-1}=0
,\,p_m=p'_m,\,q_m=q'_m.
\end{equation}
per contra, it holds as well.
\end{lem}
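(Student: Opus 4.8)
The plan is to compare the two normalizing transformations of Lemma~\ref{lem A} and Lemma~\ref{Theorem F} \emph{degree by degree}, exploiting that both are produced by the same recursive elimination scheme and differ only in the shape of the target right-hand side. Write $\xi=z+\sum c_{kj}z^kw^j$, $\eta=w+\sum d_{kj}w^kz^j$ for the series of Lemma~\ref{lem A} and $f=z+\sum c'_{kj}z^kw^j$, $g=w+\sum d'_{kj}w^kz^j$ for those of Lemma~\ref{Theorem F}, and expand each of the identities $\dot\xi=\xi+\sum_j p_j\xi^{j+1}\eta^j$ and $\dot f=f+\sum_j p'_j z^{j+1}w^j$ (together with the two counterparts for $\eta$ and $g$) into homogeneous parts of degree $n=k+j$. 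Substituting $\dot z=Z$ and $\dot w=-W$ and collecting the coefficient of $z^kw^j$, the degree-$n$ part of $\dot\xi-\xi$ equals $\sum_{k+j=n}(k-j-1)c_{kj}z^kw^j+\Xi_n$, where the feedback term $\Xi_n$ is a fixed polynomial in the $a_{kj},b_{kj}$ and in the $c_{kj}$ of degree $<n$ only, and --- this is the point --- it is the \emph{same} expression for $\xi$ and for $f$, because both transformations act on the same system. Thus at a non-resonant monomial ($k\neq j+1$) the coefficient $c_{kj}$ is forced, while at a resonant monomial ($k=j+1$, $n=2j+1$) one reads off $p_j$ (resp.\ $p'_j$) as a definite coefficient of $\Xi_n$ minus the corresponding coefficient of the target term.

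Hence the only place the two schemes can diverge is the target term: $\sum_j p_j\xi^{j+1}\eta^j$ against $\sum_j p'_j z^{j+1}w^j$. Two facts drive the argument. First, $\xi^{j+1}\eta^j$ has lowest-degree part exactly $z^{j+1}w^j$ with coefficient $1$, and its degree-$n$ part involves only the $c_{kj},d_{kj}$ of degree $\le n-2j<n$. Second, under the hypothesis $p_1=q_1=\dots=p_{m-1}=q_{m-1}=0$ every surviving target term $p_j\xi^{j+1}\eta^j$ with $j<m$ vanishes identically, so that for $n<2m+1$ the degree-$n$ target of the first scheme is zero, whereas for $n=2m+1$ only $p_m\xi^{m+1}\eta^m$ remains and contributes precisely $p_m z^{m+1}w^m$. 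I would then induct on $n$ (equivalently, argue by minimal degree of disagreement): assuming $c_{kj}=c'_{kj}$ and $d_{kj}=d'_{kj}$ for all degrees $<n\le 2m$ and $p'_j=q'_j=0$ for every $j$ with $2j+1<n$, the feedback $\Xi_n$ agrees for the two schemes and both targets vanish in degree $n$; hence all non-resonant coefficients agree in degree $n$, and at a resonant degree $n=2j+1<2m+1$ the defining equation forces $p'_j$ to equal the very coefficient of $\Xi_n$ that produced $p_j=0$, so $p'_j=q'_j=0$. Carried up to $n=2m+1$ and read off at the resonant monomial $z^{m+1}w^m$, both schemes give $p_m=(\Xi_{2m+1})_{z^{m+1}w^m}=p'_m$, and symmetrically $q_m=q'_m$; this is the forward implication.

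For the converse (``per contra''), I would run the same induction with the hypothesis $p'_1=q'_1=\dots=p'_{m-1}=q'_{m-1}=0$ now used to annihilate the degree-$n$ target of the \emph{second} scheme for $n<2m+1$: at each resonant degree $2j+1<2m+1$ the first scheme's target at $z^{j+1}w^j$ collapses to $p_j z^{j+1}w^j$ once $p_1,\dots,p_{j-1}$ have already been shown to vanish, so matching feedbacks forces $p_j=0$, and the top degree $n=2m+1$ once more yields $p_m=p'_m$ and $q_m=q'_m$. It is worth noting that the $p$-part and the $q$-part decouple: in the degree range $\le 2m+1$ relevant to the proof the $\xi$-equation involves no $q_j$ once $p_1,\dots,p_{m-1}$ vanish (the series $\eta$ enters only through target terms that are either $0$ or equal to $p_m z^{m+1}w^m$), and symmetrically; so it suffices to argue the $p$-statement and invoke the $z\leftrightarrow w$, $\xi\leftrightarrow\eta$, $p\leftrightarrow q$ symmetry.

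The step I expect to be the main obstacle is not deep but does demand care: checking that (i) $\Xi_n$ really depends only on transformation coefficients of degree $<n$ and is literally the same functional of the data for both normalizations --- which is exactly where using the \emph{same} original system is essential --- and (ii) the induction interleaves consistently, that is, the vanishing $p'_j=q'_j=0$ invoked at degree $n$ for the resonant degrees $2j+1<n$ has genuinely been produced at earlier stages rather than tacitly assumed. With that scaffolding in place, matching the coefficient of $z^{m+1}w^m$ in degree $2m+1$ is immediate from the leading term $z^{j+1}w^j$ of $\xi^{j+1}\eta^j$.
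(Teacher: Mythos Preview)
Your proposal is correct and proceeds along the same line the paper itself uses. Note that the paper does not give a self-contained proof of this lemma (it is cited from \cite{Liu-Huang}); but in the proof of Theorem~\ref{relation pj p'j}, which is the 3D analogue, the paper employs exactly your degree-by-degree induction based on the uniqueness of the formal series in Lemma~\ref{lem A} and Lemma~\ref{Theorem F}, observing that the two normalizations are forced to coincide term by term up to degree $2m+1$ once the lower $p_j,q_j$ vanish, and then reading off $p_m=p'_m$, $q_m=q'_m$ at the resonant monomial. Your write-up is in fact more explicit than the paper's sketch: you isolate the feedback term $\Xi_n$, identify where the two targets $\sum p_j\xi^{j+1}\eta^j$ and $\sum p'_j z^{j+1}w^j$ can differ, and correctly note the decoupling of the $p$- and $q$-parts in the relevant degree range, whereas the paper simply appeals to ``uniqueness of formal series one term by one term\ldots with mathematical induction.''
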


For differential systems (\ref{3w-1}), we apply the center
manifold theorem \cite{Carr}, the three-dimensional system
(\ref{3w-1}) has an approximation to the center manifold taking
the form
\begin{equation}\label{approx}
 u = u(x, y)=u_2(x, y)+{\rm h.o.t.}
 \end{equation}
where $u_2$ is a quadratic homogeneous polynomial in $x$ and $ y$,
and h.o.t denotes the terms with orders greater than or equal to
3. Substituting $u = u(x, y)$ into the equations of system
(\ref{3w-1}), we can obtain a generic two-dimensional differential
system with the same form as systems (\ref{S-real-0})
\begin{equation}\label{2w-lu}
\begin{array}{l} \dot x =-y+ {\rm h.o.t.},\;\; \dot y = x+ {\rm
h.o.t.}
\end{array}
\end{equation}
Usually the above system (\ref{2w-lu}) is called the reduced
equations of system (\ref{3w-1}), then by means of transformation
(\ref{2w-t1}), system (\ref{2w-lu}) can be changed into its
corresponding concomitant system with the same form as
 system (\ref{S-complex}),

 \begin{equation}\label{3w-2-1}
\begin{array}{l}
{{{\rm{d}}z} \over {{\rm{d}}T}} = z +{\rm h.o.t.}, \;\;\;
{{{\rm{d}}w} \over {{\rm{d}}T}} =  - w -{\rm h.o.t.}\;
 \end{array}
 \end{equation}

Furthermore, we have the following definitions.

\begin{defn}For the  system (\ref{3w-1}), the
origin  is called center on center manifolds if the origin of
system (\ref{2w-lu}) or (\ref{3w-2-1})  is a center, moreover, the
origin is called isochronous center on center manifolds if the
origin of system (\ref{2w-lu}) or (\ref{3w-2-1})  is isochronous
center.
\end{defn}
With the convenience, in the following we call the center (isochronous center) of system (\ref{3w-1}) on center manifolds as the center (isochronous center) of system (\ref{3w-1}).

In fact, by computing the singular point quantity  $\mu_k$ and
period constant $\tau_k$  of the origin of system (\ref{2w-lu})
and (\ref{3w-2-1}) according to the definitions \ref{def1} and
\ref{def2}, we can find the center conditions or isochronous
center conditions for systems (\ref{3w-1}) restricted to center
manifolds. However, one  know the dimensional reduction is not
necessarily for center-focus determining on center manifold, that
is to say, without obtaining its reduced system (\ref{2w-lu}) or
(\ref{3w-2-1}),  we can also calculate directly the corresponding
singular point quantity of system (\ref{3w-1}), see e.g.
\cite{Wang-2010,Du2014}. Correspondingly we can develop the
algorithm of Lemmas \ref{Theorem F} and \ref {Theorem G} to
investigate directly the isochronous problem on center manifolds
of the 3D systems (\ref{3w-1}), which will be seen in the next
section.
\section{Isochronous constants of 3D systems}
\hskip\parindent Now  we investigate the direct computational
method of period constants $\tau_k$ for the isochronous center on
center manifolds of  the 3D systems (\ref{3w-1}).  Firstly by
means of transformation (\ref{2w-t1}), systems (\ref{3w-1}) can
become following complex system
\begin{equation}\label{3w-2}
\begin{array}{l}
 \frac{{{\rm d}z}}
{{{\rm d}T}} = z + \sum\limits_{k + j + l = 2}^\infty  {a_{kjl} }
z^k
w^j u^l  = \tilde Z(z, w, u), \\
   \frac{{{\rm d}w}}
{{{\rm d}T}} =  - w - \sum\limits_{k + j + l = 2}^\infty  {b_{kjl}
w^k
z^j u^l }  =  -\tilde W(z, w, u), \\
   \frac{{{\rm d}u}}
{{{\rm d}T}} = {\bf{i}}du + \sum\limits_{k + j + l = 2}^\infty
{\tilde d_{kjl} } z^k w^j u^l  = \tilde U(z, w, u)
 \end{array}
 \end{equation}
where $z, w, T, a_{kjl}, b_{kjl}, \tilde d_{kjl}  \in {\mathbb
C}\, (k, j, l \in {{\mathbb N}})$, we also call that system
(\ref{3w-1}) and (\ref{3w-2}) are concomitant. When there exists
no misunderstanding, $\tilde d_{kjl}, \;\tilde Z, \;\tilde W$ and
$\;\tilde U$ are still written as $d_{kjl}, Z, W, U$.

Similar to the result of Lemma \ref{Theorem F}, we can obtain the
following theorems.
\begin{thm}\label{wql-1} For system (\ref{3w-2}),
 when taking $c_{100}  = 1, c_{001} = c_{010}=0, c_{k+1,k,0}  = 0, k = 1, 2, \cdots$,
  we can derive successively and uniquely the terms of the
 following formal series:
 \begin{equation}\label{3w-t1}
\begin{array}{l}
 f(z, w, u) = z +
\sum\limits_{\alpha  + \beta  + \gamma  = 2}^\infty  {c_{\alpha
\beta \gamma } z^\alpha  w^\beta  } u^\gamma
 \end{array}
 \end{equation}
 such that
\begin{equation}\label{3w-t2}
\frac{{\rm{d}}f}{{\rm{d}}T}-f = \frac{{\partial f}} {{\partial
z}}Z - \frac{{\partial f}} {{\partial w}}W + \frac{{\partial f}}
{{\partial u}}U-f =z \cdot \sum\limits_{m = 1}^\infty {p'_m
(zw)^{m} }
\end{equation}
and if $\alpha \ne \beta+1$ or $\alpha=\beta+1, \gamma \ne 0$,
$c_{\alpha \beta \gamma }$ is determined by following recursive
formula:
\begin{equation}\label{3w-gs1}
\begin{array}{l}
 c_{\alpha \beta \gamma }  = \frac{1}
{{1+\beta - \alpha - {\bf{i}}d\gamma }}\times \\\\ \quad \sum\limits_{k +
j + l = 3}^{\alpha + \beta + \gamma  + 2} {[(\alpha  - k + 1)a_{k,
j - 1, l} - (\beta - j + 1)b_{j, k - 1, l}  + (\gamma  - l)d_{k -
1, j - 1, l + 1} ]}\\\quad\quad\quad\quad \quad \cdot c_{\alpha -
k + 1, \beta  - j + 1, \gamma -l}
\end{array}
\end{equation}
and for any positive integer $m, \, \, p'_m $ is determined by
following recursive formula:
\begin{equation}\label{3w-gs2}
\begin{array}{l}
p'_m  = \sum\limits_{k + j + l = 3}^{2m + 3} {[(m - k + 2)a_{k,j -
1,l} - (m - j + 1)b_{j,k - 1,l}  - l\,d_{k - 1,j - 1,l + 1} ]}
\\ \quad\quad\quad\quad \quad \cdot c_{m - k + 2,m - j + 1, - l}
\end{array}
\end{equation}
and when  $\alpha < 0$ or $\beta < 0$ or $\gamma<0$ or $\gamma=0,
\alpha=\beta+1$, we have let $\;c_{\alpha, \beta, \gamma} = 0$.
\end{thm}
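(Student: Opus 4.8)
The plan is to substitute the ansatz \eqref{3w-t1} into the left-hand side of \eqref{3w-t2} and compare coefficients of each monomial $z^\alpha w^\beta u^\gamma$, showing that the recursion \eqref{3w-gs1}--\eqref{3w-gs2} is exactly what makes the identity hold, and that it determines all the unknowns uniquely in the prescribed order. First I would compute $\frac{\partial f}{\partial z}Z - \frac{\partial f}{\partial w}W + \frac{\partial f}{\partial u}U - f$ formally as a power series. Writing $f = \sum_{\alpha+\beta+\gamma\ge 1} c_{\alpha\beta\gamma} z^\alpha w^\beta u^\gamma$ with $c_{100}=1$, $c_{010}=c_{001}=0$, and using the series for $Z,W,U$ from \eqref{3w-2}, the coefficient of $z^\alpha w^\beta u^\gamma$ in this expression works out to
\[
(\alpha + {\bf i}d\gamma - \beta - 1)\,c_{\alpha\beta\gamma}
 + \sum_{k+j+l=3}^{\alpha+\beta+\gamma+2}\bigl[(\alpha-k+1)a_{k,j-1,l} - (\beta-j+1)b_{j,k-1,l} + (\gamma-l)d_{k-1,j-1,l+1}\bigr] c_{\alpha-k+1,\beta-j+1,\gamma-l},
\]
where the convention $c_{\alpha\beta\gamma}=0$ for negative indices handles boundary terms. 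The book-keeping here — tracking how differentiation lowers one index while multiplication by a monomial of $Z$, $W$ or $U$ raises the others, and getting the shift in the summation limits right so that the "diagonal" term $c_{\alpha\beta\gamma}$ is separated out — is the main computational obstacle, though it is the same kind of manipulation as in the planar case of Lemma \ref{Theorem F}.

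Next I would analyze the factor $\lambda_{\alpha\beta\gamma} := 1 + \beta - \alpha - {\bf i}d\gamma$ multiplying $c_{\alpha\beta\gamma}$. When $\alpha \ne \beta + 1$, or when $\alpha = \beta+1$ but $\gamma \ne 0$, this factor is nonzero (since $d$ is real, ${\bf i}d\gamma$ is purely imaginary and cannot cancel the nonzero real part $1+\beta-\alpha$; and if $\alpha=\beta+1$ the real part vanishes but then $\gamma\ne 0$ forces the imaginary part to be nonzero). In that case the right-hand side of \eqref{3w-t2} contributes nothing to that monomial (it only involves monomials $z^{m+1}w^m u^0$), so setting the whole coefficient to zero and solving for $c_{\alpha\beta\gamma}$ gives precisely \eqref{3w-gs1}. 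When $\alpha=\beta+1=m+1$ and $\gamma=0$, the factor $\lambda$ vanishes, so $c_{m+1,m,0}$ is free; we fix it to $0$ by hypothesis, and the remaining coefficient equation determines $p'_m$ — matching the coefficient of $z^{m+1}w^m$ on the right — which after substituting $\alpha=m+1,\beta=m,\gamma=0$ into the sum yields \eqref{3w-gs2} (the $\gamma - l = -l$ in the last bracket and the $-l$ third index of $c$ being the specialization).

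Finally I would check that the recursion is \emph{well-posed}, i.e.\ that every $c_{\alpha\beta\gamma}$ on the right-hand side of \eqref{3w-gs1}, \eqref{3w-gs2} has strictly smaller weight $\alpha+\beta+\gamma$ than the term being defined: since $k+j+l \ge 3$, the index $c_{\alpha-k+1,\beta-j+1,\gamma-l}$ has weight $\alpha+\beta+\gamma - (k+j+l) + 2 \le \alpha+\beta+\gamma - 1$, so the formal series is generated term by term with no circularity, proving both existence and uniqueness. This ordering argument, together with the case split on $\lambda_{\alpha\beta\gamma}$, completes the proof; I expect no further difficulty beyond the initial coefficient extraction.
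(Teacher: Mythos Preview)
Your proposal is correct and follows essentially the same route as the paper: expand $\frac{\partial f}{\partial z}Z - \frac{\partial f}{\partial w}W + \frac{\partial f}{\partial u}U - f$ as a formal power series, extract the coefficient of $z^\alpha w^\beta u^\gamma$, and read off the recursions by matching against the right-hand side of \eqref{3w-t2}. Your write-up is in fact more complete than the paper's, since you explicitly verify that the denominator $1+\beta-\alpha-{\bf i}d\gamma$ is nonzero in the relevant cases and check well-posedness via the total-degree drop, whereas the paper simply states ``comparing the above power series with the right side of \eqref{3w-t2}, then we can obtain the recursive formulas'' after displaying the coefficient expansion.
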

\begin{proof}
From system (\ref{3w-2}), we can denote
\begin{equation}\label{}
\begin{array}{l}
  Z = z + \sum\limits_{k + j + l \ge 3} {a_{k, j - 1, l} z^k w^{j - 1} } u^l , \\
   W = w + \sum\limits_{k + j + l \ge 3} {b_{j, k - 1, l} z^{k - 1} w^j u^l
   },   \\
   U={\bf i}du + \sum\limits_{k + j + l \ge 3} {d_{k - 1, j - 1, l + 1} z^{k - 1} w^{j - 1} } u^{l + 1}
 \end{array}
 \end{equation}
 then we have the following,
 $$
\begin{array}[t]{l}
  {{\partial f} \over {\partial z}}Z - {{\partial f} \over {\partial w}}W + {{\partial f} \over {\partial
  u}}U-f
  \\
 \quad\quad  = \sum\limits_{\alpha  + \beta  + \gamma  \ge 1} {(\alpha  - \beta  + {\bf{i}}d\gamma -1)c_{\alpha \beta \gamma } z^\alpha  w^\beta  u^\gamma  }  +
   \\
 \quad\quad \quad \sum\limits_{\alpha  + \beta  + \gamma  \ge 1} {\sum\limits_{k + j + l \ge 3} {[\alpha a_{k, j - 1, l}  - \beta b_{j, k - 1, l}  + \gamma d_{k - 1, j - 1, l + 1} ]c_{\alpha \beta \gamma } z^{\alpha  + k - 1} w^{\beta  + j - 1} u^{\gamma  + l} } }
  \\
   \quad\quad= \sum\limits_{\alpha  + \beta  \ge 1} {z^\alpha  w^\beta  u^\gamma  \{(\alpha  - \beta  + {\bf{i}}d\gamma -1)c_{\alpha \beta \gamma }
   }+
   \\
  \quad\quad   \sum\limits_{k + j + l \ge 3} {[(\alpha  - k + 1)a_{k, j - 1, l}  - (\beta  - j + 1)b_{j, k - 1, l}  + (\gamma  - l)d_{k - 1, j - 1, l + 1} ]c_{\alpha  - k + 1, \beta  - j + 1, \gamma  - l}
  }\}
\end{array}$$
and comparing the above power series with the right side of
(\ref{3w-t2}), then we can obtain the  recursive formulas
(\ref{3w-gs1}) and (\ref{3w-gs2}).
\end{proof}

With the same principle, we have

\begin{thm}\label{wql-1-1} For system (\ref{3w-2}),
 when taking $e_{100}  = 1, e_{001} = e_{010}=0, e_{k+1,k,0}  = 0, k = 1, 2, \cdots$,
  we can derive successively and uniquely the terms of the
 following formal series:
 \begin{equation}\label{3w-t1-1}
\begin{array}{l}
 g(w, z, u) = w +
\sum\limits_{\alpha  + \beta  + \gamma  = 2}^\infty  {e_{\alpha
\beta \gamma } w^\alpha  z^\beta  } u^\gamma
 \end{array}
 \end{equation}
 such that
\begin{equation}\label{3w-t2-1}
\frac{{\rm{d}}g}{{\rm{d}}T} +g= \frac{{\partial g}} {{\partial
z}}Z - \frac{{\partial g}} {{\partial w}}W + \frac{{\partial g}}
{{\partial u}}U+g =-w \cdot \sum\limits_{m = 1}^\infty {q'_m
(zw)^{m} }
\end{equation}
and if $\alpha \ne \beta+1$ or $\alpha=\beta+1, \gamma \ne 0$,
$e_{\alpha \beta \gamma }$ is determined by following recursive
formula:
\begin{equation}\label{3w-gs1-1}
\begin{array}{l}
 e_{\alpha \beta \gamma }  = \frac{1}
{{1+\beta - \alpha - {\bf{i}}d\gamma }}\\\\ \quad \sum\limits_{k +
j + l = 3}^{\alpha + \beta + \gamma  + 2} {[(\alpha  - k + 1)b_{k,
j - 1, l} - (\beta - j + 1)a_{j, k - 1, l}  - (\gamma  - l)d_{j -
1, k - 1, l + 1} ]}\\\quad\quad\quad\quad \quad {\cdot} e_{\alpha
- k + 1, \beta  - j + 1, \gamma -l}
\end{array}
\end{equation}
and for any positive integer $m, \, \, p'_m $ is determined by
following recursive formula:
\begin{equation}\label{3w-gs2-1}
\begin{array}{l}
p'_m  = \sum\limits_{k + j + l = 3}^{2m + 3} {[(m - k + 2)b_{k,j -
1,l} - (m - j + 1)a_{j,k - 1,l}  - l\,d_{j - 1,k - 1,l + 1} ]}\\
\quad\quad\quad\quad\quad\quad  {\cdot} e_{m - k + 2,m - j + 1, -
l}
\end{array}
\end{equation}
and when  $\alpha < 0$ or $\beta < 0$ or $\gamma<0$ or $\gamma=0,
\alpha=\beta+1$, we have let $\;c_{\alpha, \beta, \gamma} = 0$.
\end{thm}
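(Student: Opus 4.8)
The plan is to repeat, step for step, the argument used to prove Theorem \ref{wql-1}, now with the variables $z$ and $w$ (and the coefficient families $a_{kjl}$ and $b_{kjl}$) playing interchanged roles, and with the sign adjustments forced by the ``$+g$'' on the left of (\ref{3w-t2-1}) and the ``$-w$'' on its right. Concretely, I would first rewrite the three components of the vector field of (\ref{3w-2}) with shifted indices so that every monomial matches the shape $w^{\alpha}z^{\beta}u^{\gamma}$ of the terms of $g$ in (\ref{3w-t1-1}); then substitute the ansatz (\ref{3w-t1-1}) into the operator $\frac{\partial g}{\partial z}Z-\frac{\partial g}{\partial w}W+\frac{\partial g}{\partial u}U+g$, expand by the product rule, and collect the coefficient of $w^{\alpha}z^{\beta}u^{\gamma}$ for each $(\alpha,\beta,\gamma)$ with $\alpha+\beta+\gamma\geq 1$.

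Each such coefficient has the form $(1+\beta-\alpha-{\bf{i}}d\gamma)\,e_{\alpha\beta\gamma}$ plus a finite sum, over $3\leq k+j+l\leq\alpha+\beta+\gamma+2$, of the $b$-, $a$- and $d$-coefficients weighted against terms $e_{\alpha-k+1,\,\beta-j+1,\,\gamma-l}$ (with the conventions that such an $e$ is $0$ whenever one of its indices is negative or whenever $\gamma=0$ and $\alpha=\beta+1$). The decisive structural remark is that in that sum every index triple has total weight $(\alpha-k+1)+(\beta-j+1)+(\gamma-l)=(\alpha+\beta+\gamma)+2-(k+j+l)\leq(\alpha+\beta+\gamma)-1$ because $k+j+l\geq 3$; hence the identity is a genuine recursion in the weight $\alpha+\beta+\gamma$, to be processed by induction on that weight. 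Matching against the right-hand side $-w\sum_{m\geq 1}q'_{m}(zw)^{m}=-\sum_{m\geq 1}q'_{m}\,w^{m+1}z^{m}$, whose only nonzero monomials occur at $\alpha=\beta+1,\ \gamma=0$, the induction step splits in two. If $\alpha\neq\beta+1$, or $\alpha=\beta+1$ with $\gamma\neq 0$, the factor $1+\beta-\alpha-{\bf{i}}d\gamma$ is nonzero --- its real part $1+\beta-\alpha$ vanishes only when $\alpha=\beta+1$, and then its imaginary part $-d\gamma$ is nonzero because $\gamma\neq 0$ and $d\neq 0$ --- so the relation solves uniquely for $e_{\alpha\beta\gamma}$, which is (\ref{3w-gs1-1}). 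If $\alpha=\beta+1$ and $\gamma=0$, that factor vanishes, the coefficient it would multiply is exactly the $e_{k+1,k,0}$ already normalized to $0$, and the relation instead determines the free constant; putting $\alpha=m+1,\ \beta=m,\ \gamma=0$ yields the recursion (\ref{3w-gs2-1}) for $q'_{m}$. (At $(\alpha,\beta,\gamma)=(1,0,0)$ the identity reduces to $0=0$, consistent with $g=w+\cdots$.) Altogether this establishes existence, term-by-term uniqueness of $g$, and the stated formulas.

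The only genuinely delicate part is the bookkeeping: carrying two mutually consistent reindexings of the vector field through the product rule, correctly tracking the signs (and the transposition of the first two indices in the $d$-block) contributed by $-\frac{\partial g}{\partial w}W$ and $\frac{\partial g}{\partial u}U$ --- precisely the slots where the $g$-formulas depart from the $f$-formulas of Theorem \ref{wql-1} --- and checking the summation bounds $3\leq k+j+l\leq\alpha+\beta+\gamma+2$. A useful cross-check is that the formal exchange $z\leftrightarrow w$, $a_{kjl}\leftrightarrow b_{kjl}$ should carry the recursion (\ref{3w-gs1}) into (\ref{3w-gs1-1}); one could even present the whole result as a corollary of Theorem \ref{wql-1} applied to the copy of system (\ref{3w-2}) in which $z$ and $w$ are interchanged (and time reversed), but the direct verification sketched above is the more transparent route.
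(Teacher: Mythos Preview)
Your proposal is correct and takes essentially the same approach as the paper: the paper's entire proof consists of the single phrase ``With the same principle, we have'', i.e.\ it asserts that the computation proving Theorem~\ref{wql-1} carries over after swapping the roles of $z$ and $w$ (and $a_{kjl}$, $b_{kjl}$). Your write-up supplies the detail the paper omits---the induction on the total weight $\alpha+\beta+\gamma$, the reason the denominator $1+\beta-\alpha-{\bf i}d\gamma$ does not vanish in the relevant cases, the sign and index-transposition bookkeeping in the $d$-block, and the cross-check via the $z\leftrightarrow w$ symmetry---so your argument is in fact more complete than the paper's.
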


Furthermore we can know that system (\ref{3w-2-1}),  as the
concomitant system of reduced equations of the original 3D systems
(\ref{3w-1}), has also $p_j, q_j$ series which can be uniquely
determined in the normal form of the lemma \ref{lem A}. Thus we
have the relations between $p_j, q_j$ and $p'_j,
q'_j\,(j=1,2,\cdots)$ in theorems \ref{wql-1} and \ref{wql-1-1}
are as follows:
\begin{thm}\label{relation pj p'j}
Let $p_0=q_0=p'_0=q'_0=0$. If existing a positive integer $m$,
such that
\begin{equation}\label{p=q=0}
p_0=q_0=p_1=q_1=\cdots=p_{m-1}=q_{m-1}=0,
\end{equation}
then,
\begin{equation}\label{p=p'}
p'_0=q'_0=p'_1=q'_1=\cdots=p'_{m-1}=q'_{m-1}=0
,\,p_m=p'_m,\,q_m=q'_m.
\end{equation}
per contra, it holds as well. Correspondingly, the origin of system (\ref{3w-1}) is an isochronous center if and only if $p'_k=q'_k=0,\,k=1,2,3\cdots.$ We also call $p'_k,\,q'_k$ the isochronous constants of the origin of system (\ref{3w-1}).
\end{thm}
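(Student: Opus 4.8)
The plan is to reduce the statement to the already-established planar results, namely Lemma \ref{Theorem G}, by passing to the reduced concomitant system (\ref{3w-2-1}) on the center manifold. First I would observe that the formal series $f(z,w,u)$ constructed in Theorem \ref{wql-1} and its partner $g(w,z,u)$ from Theorem \ref{wql-1-1}, when restricted to the local center manifold $u = u(z,w)$ of system (\ref{3w-2}), produce formal series in $z$ and $w$ alone. The key algebraic fact to check is that these restricted series satisfy, up to the relevant order, exactly the same defining relations (\ref{3.3}) that characterise the series $f,g$ of Lemma \ref{Theorem F} for the planar concomitant system (\ref{3w-2-1}). Indeed, on the invariant center manifold the $u$-equation is slaved to the $(z,w)$-dynamics, so $\frac{df}{dT}$ computed via the chain rule in three variables coincides with $\frac{df}{dT}$ computed in the reduced two variables; the normalising conditions $c_{100}=1$, $c_{001}=c_{010}=0$, $c_{k+1,k,0}=0$ collapse to $c'_{100}=1$, $c'_{k+1,k}=0$, which are precisely the normalisation in Lemma \ref{Theorem F}. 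Hence the $p'_m$ of Theorem \ref{wql-1} equal the $p'_m$ of Lemma \ref{Theorem F} applied to (\ref{3w-2-1}), and likewise for $q'_m$.

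Once this identification is in place, the first two sentences of the theorem — the implication (\ref{p=q=0}) $\Rightarrow$ (\ref{p=p'}) together with $p_m=p'_m$, $q_m=q'_m$, and its converse — follow immediately from Lemma \ref{Theorem G} applied to the planar system (\ref{3w-2-1}), since $p_j,q_j$ are by construction the normal-form coefficients of (\ref{3w-2-1}) in the sense of Lemma \ref{lem A}. For the final ``if and only if'' statement I would argue as follows: by Definition \ref{def2}, the origin of (\ref{3w-2-1}) is an isochronous center precisely when $p_k=q_k=0$ for all $k\ge 1$; an induction on $k$ using the just-proved relations shows that $p_k=q_k=0$ for all $k$ is equivalent to $p'_k=q'_k=0$ for all $k$. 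By the Definition of (isochronous) center on center manifolds, the origin of the 3D system (\ref{3w-1}) is an isochronous center exactly when the origin of (\ref{3w-2-1}) is, which closes the chain of equivalences.

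The main obstacle I expect is justifying rigorously that the three-variable series $f(z,w,u)$ restricted to $u=u(z,w)$ really does reproduce the two-variable normalising transformation of Lemma \ref{Theorem F} — that is, that restriction commutes with the normalisation procedure. This requires care because the center manifold is only a formal (or finitely-smooth) object, and one must check order by order that the terms $c_{\alpha\beta\gamma}$ with $\gamma>0$, after substitution of the power series $u(z,w)$, do not disturb the coefficients that Lemma \ref{Theorem F} fixes, and that the resulting reduced series is uniquely determined with the stated normalisation. The cleanest route is an induction on the total degree $\alpha+\beta+\gamma$: assuming the claim through degree $n-1$, one compares the degree-$n$ part of the identity (\ref{3w-t2}) restricted to the manifold with the degree-$n$ part of (\ref{3.3}) for (\ref{3w-2-1}), using invariance of the manifold to eliminate the $u$-derivative contribution; the resonant terms $\alpha=\beta+1,\gamma=0$ then yield $p'_m$, matching the planar recursion. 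Apart from this bookkeeping, the argument is a direct transcription of the planar theory.
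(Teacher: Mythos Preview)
Your approach is essentially the paper's: both restrict the three-variable series $f(z,w,u)$ from Theorem \ref{wql-1} to the center manifold $u=\tilde u(z,w)$, use invariance of the manifold to identify the resulting relation with the planar normal-form relation for system (\ref{3w-2-1}), and conclude by uniqueness and an induction on the degree. The only cosmetic difference is that the paper compares the restricted series directly with the $\xi$ of Lemma~\ref{lem A} and runs the induction by hand, whereas you factor through Lemma~\ref{Theorem F} and then invoke Lemma~\ref{Theorem G}; the obstacle you single out---that restriction must preserve the normalisation $c'_{k+1,k}=0$---is exactly the step the paper also treats only by assertion.
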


\begin{proof}
On the one hand by substituting the center manifold
(\ref{approx}): $u=u(x, y)$, then by means of transformation
(\ref{2w-t1}), we can obtain systems (\ref{2w-lu}) and
(\ref{3w-2-1}) in turn from system (\ref{3w-1}). On the other
hand, first by means of transformation (\ref{2w-t1}), then by
substituting the center manifold (\ref{approx}) with the following
form
\begin{equation}\label{approx-1}
 u = u(x, y)= u({{z+w}\over{2}}, {{z-w}\over{2{\bf i}}})\;\buildrel \Delta \over =\;\tilde u(z, w)
 \end{equation}
 we can also obtain systems
(\ref{3w-2}) and (\ref{3w-2-1}) in turn from system (\ref{3w-1}).

 Thus from the above
relation (\ref{approx-1}) and the theorem \ref{wql-1}, we have
$$
\begin{array}[t]{l}
f(z, w, u) = f(z, w, \tilde u(z, w)) = z + \sum\limits_{\alpha  +
\beta + \gamma  = 2}^\infty  {c_{\alpha \beta \gamma } z^\alpha
w^\beta }
[\tilde u(z, w)]^\gamma  \\
\quad \quad \quad \quad\,\,\, \buildrel \Delta \over =  \tilde
f(z, w) +{\rm{h.o.t.}}
\end{array} $$
where
\begin{equation}\label{f-new}
\tilde f(z,w)=z+\sum\limits_{k+j=2}^{2n+1}c'_{kj} z^k w^j
\end{equation}
where the coefficients $c'_{kj}$ in series (\ref{f-new})  are
determined successively and uniquely, and for $n$ being any a
positive integer, $c'_{k+1,k}=0,\: k=1,2,\cdots, n$. Moreover we
have
\begin{equation}\label{3w-t4}
\begin{array}{l}
 \left. {{{df} \over {dT}}} \right|_{(\ref{3w-2})}-f={{\partial f}
\over {\partial z}}Z - {{\partial f} \over {\partial y}}W +
{{\partial f} \over {\partial u}}U -f= \left. {{{d\tilde f} \over
{dT}}} \right|_{(\ref{3w-2-1})}-\tilde f  + {\rm{h.o.t.}} \\\\
\quad \quad \quad \;\; = z \cdot \sum\limits_{m = 1}^{n} {p'_m
(zw)^{m}} + {\rm{h.o.t.}}
 \end{array}
 \end{equation}

Next, according to the lemma \ref{lem A},  for system
(\ref{3w-2-1}), we can uniquely determine the following series
\begin{equation}\label{}
\xi(z,w)=z+\sum\limits_{k+j=2}^\infty c_{kj} z^k w^j \;\buildrel
\Delta \over =\;  \tilde \xi(z, w) + {\rm{h.o.t.}}.
\end{equation}
where
\begin{equation}\label{xi-new}
\tilde \xi(z,w)=z+\sum\limits_{k+j=2}^{2n+1}c_{kj} z^k w^j
\end{equation}
and $c_{k+1,k}=0,\: k=1,2,\cdots$, such that
\begin{equation}\label{3.16}
\begin{array}{l}
  \frac{d\xi}{dT}-\xi= \sum\limits_{m = 1}^\infty  {p_m\,
\xi ^{m + 1} \eta ^m }=\sum\limits_{m = 1}^\infty  {p_m(
  z^{m+1}w^m+{\rm{h.o.t.}})}
  \\\\
\quad \quad \quad \;=\sum\limits_{m = 1}^n  {p_m(
  z^{m+1}w^m+{\rm{h.o.t.}})}+\rm{h.o.t.}
   \end{array}
  \end{equation}
Considering the uniqueness of formal series one term by one term
in the lemma \ref{lem A} and theorem \ref{wql-1}, from expressions
(\ref{3.16}) and (\ref{3w-t4}), with mathematical induction for
$m$, it is easy to get $ p'_0=p'_1=\cdots=p'_{m-1}=0 ,\,p_m=p'_m$,
and  $\tilde \xi (z, w) = \tilde f(z, w)$ with mathematical
induction. In the same way, it is also easy to get  $
q'_0=q'_1=\cdots=q'_{m-1}=0$ and $q_m=q'_m$, namely the expression
(\ref{p=p'}) holds.
\end{proof}

For the isochronous center on center manifolds of 3D system
(\ref{3w-1}), Theorem \ref{wql-1}, Theorem \ref{wql-1-1} and
\ref{relation pj p'j} give a direct algorithm to compute period
constant $\tau_m=p_m+q_m$, namely, we can apply directly the above
theorems to find the necessary conditions of the isochronous
center, needless to solve firstly the center problem. The
algorithm is linear recurrence and then avoids complex integrating
operations and solving equations, which is easy to realize with
computer algebra system.

\begin{rmk}
we can't use Theorem \ref{wql-1}, Theorem \ref{wql-1-1} and
Theorem \ref{relation pj p'j} to compute singular point quantities
$\mu_l=p_l-q_l\,\,(l=1,2,\cdots)$, because the one condition of
Theorem \ref{relation pj p'j} is expressions (\ref{p=q=0}), while
the computation of $\mu_l$ is only under the condition
$\mu_0=\mu_1=\cdots=\mu_{l-1}=0$.
\end{rmk}
\section{Examples }
 \hskip\parindent As the applications,  now we consider the  isochronous center of two quadratic
 systems restricted to the center manifold. In the following, we write isochronous constants $p'_k,\,q'_k$ in Theorem \ref{wql-1} and \ref{wql-1-1} as $p_k,\,q_k$ respectively.
\subsection{The Moon-Rand system}
\hskip\parindent The Moon-Rand system was introduced by Moon and
Rand and  developed to model the control of flexible space
structures in \cite{Moon-Rand}, which is a three dimensional
differential system with the following form:
\begin{equation}\label{exam-1}
 \begin{array}{l}
 \frac{{{\rm{d}}x}}{{{\rm{d}}t}} = y,\quad
\frac{{{\rm{d}}y}}{{{\rm{d}}t}} =  - x - xu,\quad
\frac{{{\rm{d}}u}}{{{\rm{d}}t}} =  - c_0 u + c_1 x^2  + c_2 xy +
c_3 y^2
 \end{array}
\end{equation}
where $c_0,c_1,c_2,c_3 \in {{\mathbb R}}$  and $c_0>0$. Recently
some authors have studied its integrability and bifurcation of
limit cycles(e.g. see \cite{Mahdi-MR,Llibre-MR}), here we consider
its isochronicity on the local center manifold.  By means of
transformation: $z=x +{\bf{i}}y,\, w=x -{\bf{i}}y, \,T
={\bf{i}}\,t$,  we can get its complex concomitant system from
system (\ref{exam-1}):
\begin{equation}\label{MR-example}
\begin{array}{l}
 \frac{{{\rm{d}}z}}{{{\rm{d}}T}} = z + { {1 \over 2}}u(z + w),\\
 \frac{{{\rm{d}}w}}{{{\rm{d}}T}} =  - w - { {1 \over 2}}u(z + w), \\
 \frac{{{\rm{d}}u}}{{{\rm{d}}T}} =  - {\bf{i}}c_0 u + { {1 \over 4}}{\bf{i}}(c_1  - c_3  - {\rm{i}}c_2 )z^2  + {{1 \over 2}}{\bf{i}}(c_1  + c_3 )zw + {{1 \over 4}}{\bf{i}}(c_1  - c_3  + {\rm{i}}c_2
 )w^2.
\end{array}
\end{equation}
 For the system (\ref{MR-example}), according to
Theorem \ref{wql-1},Theorem \ref{wql-1-1} and Theorem
\ref{relation pj p'j}, we can get the recursive formulas to
compute isochronous constants for any positive integer  $m$,
for example when $m=20$, we can obtain the first twenty
isochronous constants as follows:
\begin{equation}\label{uu3}
\begin{array}{l}
p_1  =\frac{{c_0 (3c_1  + c_3 ) - {\bf{i}}(4c_1  + c_0 c_2  + 4c_3
)}}{{8c_0(c_0  - 2{\bf{i}})}},\;  p_2=- \frac{{3(c_0  +
2{\bf{i}})c_2^2 }}{{1024(c_0  - 2{\bf{i}})}},\;\\
p_3=\cdots=p_{20}=0,\\
 q_1  =-\frac{{c_0 (3c_1  + c_3 ) + {\bf{i}}(4c_1  + c_0 c_2  + 4c_3
)}}{{8c_0(c_0 + 2{\bf{i}})}},\; q_2= \frac{{3(c_0  -
2{\bf{i}})c_2^2 }}{{1024(c_0+ 2{\bf{i}})}}, \;\\
q_3=\cdots=q_{20}=0
\end{array}
\end{equation}
where for each $p_m$ in the above expression, we have already
let\,$p_1=p_2=\cdots=p_{l-1}=0$; for each $q_m$, we have already
let\,$p_1=q_1=p_2=q_2=\cdots=p_{l-1}=q_{l-1}=0,l=2,3,\cdots 20$.

According to the above calculating results, then we have

\begin{thm}\label{cen-condition00} The origin of system (\ref{exam-1})(or (\ref{MR-example})) is an isochronous center   if and only if
the following conditions is satisfied:
\begin{equation}\label{iso-condition-0}
  \begin{array}{ll}
c_1=c_2=c_3 = 0
\end{array}
\end{equation}
\end{thm}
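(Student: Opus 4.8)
The plan is to establish the two directions of the biconditional separately. For the necessity (``only if'') part, I would invoke the computations of the isochronous constants that were just carried out via Theorem~\ref{wql-1}, Theorem~\ref{wql-1-1}, and Theorem~\ref{relation pj p'j}. By Theorem~\ref{relation pj p'j}, the origin of system~(\ref{exam-1}) is an isochronous center if and only if $p'_k=q'_k=0$ for all $k$; in particular the first few isochronous constants $p_1,q_1,p_2,q_2$ displayed in~(\ref{uu3}) must all vanish. First I would set $p_1=0$: since $c_0>0$, looking at the real and imaginary parts of the numerator $c_0(3c_1+c_3)-\mathbf{i}(4c_1+c_0c_2+4c_3)$ forces $3c_1+c_3=0$ and $4c_1+c_0c_2+4c_3=0$ simultaneously. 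Then I would set $p_2=0$, which (since $c_0+2\mathbf{i}\neq 0$) forces $c_2^2=0$, hence $c_2=0$. Substituting $c_2=0$ back into the two linear relations from $p_1=0$ gives $3c_1+c_3=0$ and $c_1+c_3=0$, whose only solution is $c_1=c_3=0$. This yields condition~(\ref{iso-condition-0}). (I should double-check that $q_1=0$ and $q_2=0$ give no information beyond what $p_1,p_2$ already gave, which is immediate since their numerators are the complex conjugates up to sign.)

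For the sufficiency (``if'') part, I would substitute $c_1=c_2=c_3=0$ into system~(\ref{exam-1}) and show directly that the origin is an isochronous center on the center manifold. With these values the system becomes
\begin{equation}\label{reduced-MR}
\frac{\mathrm{d}x}{\mathrm{d}t}=y,\qquad \frac{\mathrm{d}y}{\mathrm{d}t}=-x-xu,\qquad \frac{\mathrm{d}u}{\mathrm{d}t}=-c_0u.
\end{equation}
The key observation is that the plane $u=0$ is now invariant: if $u(0)=0$ then $u(t)\equiv 0$, so the local center manifold is exactly $\{u=0\}$, and the flow restricted to it is governed by $\dot x=y,\ \dot y=-x$, i.e. the linear harmonic oscillator. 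Hence every orbit near the origin on the center manifold is a circle traversed with constant angular speed $1$, so all periodic orbits have period $2\pi$, and the origin is an isochronous center. Alternatively, one can cite that $p'_k=q'_k=0$ for all $k$ follows from~(\ref{uu3}) once $c_1=c_2=c_3=0$ (all listed constants vanish) together with the fact that the reduced system is literally linear, so no higher isochronous constant can be nonzero.

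The main obstacle, such as it is, lies in the necessity direction: one must argue that vanishing of the \emph{finitely many} explicitly computed constants $p_1,p_2$ (equivalently $\tau_1,\tau_2$) already pins down the parameters completely, so that no further isochronous constants need be examined; this is clean here because $p_2=0$ immediately yields $c_2=0$ and then the linear system coming from $p_1=0$ is nonsingular in $(c_1,c_3)$. A secondary point worth a sentence is the logical role of the conditions under which~(\ref{uu3}) was computed: each $p_m$ (resp.\ $q_m$) there was obtained after imposing $p_1=\cdots=p_{m-1}=0$ (resp.\ the analogous mixed conditions), exactly as required to apply Theorem~\ref{relation pj p'j}, so the chain of reductions $p_1=0 \Rightarrow p_2=0 \Rightarrow c_1=c_2=c_3=0$ is valid. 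Once the parameter set is reduced to~(\ref{iso-condition-0}), sufficiency is the easy invariant-plane argument above, and the proof is complete.
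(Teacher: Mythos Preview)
Your proposal is correct and follows essentially the same approach as the paper: necessity is extracted from the vanishing of the first isochronous constants in~(\ref{uu3}) (you spell out the algebra of $p_1=p_2=0$ more explicitly than the paper, which simply calls it ``obvious''), and sufficiency is proved by observing that under $c_1=c_2=c_3=0$ the plane $u=0$ is invariant and hence is the center manifold, on which the flow is linear---the paper carries this out in the complex coordinates of~(\ref{MR-example}) while you do it in the real coordinates of~(\ref{exam-1}), but the argument is the same.
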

\begin{proof}

The necessity of condition is obvious, we can obtain easily the
above conditions from the vanishing of the first 20 isochronous constants, namely,
let\,$p_1=q_1=\cdots=p_{m}=q_{m}=0,m=2,3,\cdots, 20$.

Now we prove the sufficient condition, this technique derives from
the Darboux theory of integrability (one can see some notions and
facts in \cite{zhangx-09}-\cite{Cunha2}). In fact, when
$c_1=c_2=c_3 = 0$, we figure out easily the algebraic invariant
surface for system (\ref{MR-example}): $F(z,w,u)=u$. One can
observe that $F(z,w,u)=u=0$ is just the center eigenspace, i.e.,
$(x,y)$-plane. Thus it forms a local center manifold in a
neighborhood of the origin.  We substitute $u=0$ into the first
and second equations of the system defined by system
(\ref{MR-example}), we have the differential equations
\begin{equation}\label{MR-example-1}
\begin{array}{l}
\frac{{{\rm{d}}z}}{{{\rm{d}}T}} = z,\;\;
\frac{{{\rm{d}}w}}{{{\rm{d}}T}} =  - w
\end{array}
\end{equation}
then the origin is a isochronous center for systems
(\ref{MR-example-1}), thus  when $c_1=c_2=c_3 = 0$, the origin is
a isochronous center for the flow of system (\ref{MR-example}) or
(\ref{exam-1}) restricted to a center manifold. We complete the
proof of this theorem.
\end{proof}

\subsection{A class of complex quadratic system}
\hskip\parindent  A class of complex quadratic system  with the
following form is considered
\begin{equation}\label{C-example}
\begin{array}{l}
\frac{{{\rm{d}}z}}{{{\rm{d}}T}} = z(1 + a_1 z + b_1 w + c_1 u),\\
\frac{{{\rm{d}}w}}{{{\rm{d}}T}} =  - w(1 + a_2 z + b_2 w + c_2
u),\\ \frac{{{\rm{d}}u}}{{{\rm{d}}T}} = u({\bf{i}} \,r + a_3 z +
b_3 w + c_3 u)
\end{array}
\end{equation}
where $u,r \in {{\mathbb R}}$, $z, w, T \in {{\mathbb C}}$, and
\begin{equation}\label{}
 \begin{array}{l}
a_2  = \bar b_1 ,b_2  = \bar a_1 ,c_2  = \bar c_1,
 b_3  =  - \bar a_3 ,c_3  =  - \bar c_3.
 \end{array}
\end{equation}
In fact, by means of transformation: $z=x +{\bf{i}}y,\, w=x
-{\bf{i}}y, \,T ={\bf{i}}\,t$,  we can get its real concomitant
system from system (\ref{C-example}):
 \begin{equation}\label{exam-2}
 \begin{array}{l}
 \frac{{{\rm{d}}x}}{{{\rm{d}}t}} = -y  + X_2 (x, y, u) = X, \\
 \frac{{{\rm{d}}y }}{{{\rm{d}}t}} =x  + Y_2 (x, y, u) = Y, \\
 \frac{{{\rm{d}}u}}{{{\rm{d}}t}} ={-r}\,u + U_2 (x, y, u) = U \\
 \end{array}
\end{equation}
where $X_2,Y_2$ and $U_2$ are all quadratic homogeneous
polynomials in $(x, y, u)$ determined by the coefficients of
system (\ref{C-example}).

 For the system (\ref{C-example}), by applying
the same algorithm, we can get the recursive formulas to compute
isochronous constants for any positive integer $m$, for
example when $m=20$, we can obtain the first twenty generalized
constants as follows:
\begin{equation}\label{uu3}
\begin{array}{l}
p_1  =- b_1 (a_1  + a_2 ),  p_2=p_3=\cdots=p_{20}=0,\\
 q_1  =a_2 (b_1  + b_2 ), q_2=q_3=\cdots=q_{20}=0,
\end{array}
\end{equation}
where for each $p_m$ in the above expression, we have already
let\,$p_1=p_2=\cdots=p_{l-1}=0$; for each $q_m$, we have already
let\,$p_1=q_1=p_2=q_2=\cdots=p_{l-1}=q_{l-1}=0,l=2,3,\cdots 20$.

According to the above calculating results, then we have

\begin{thm}\label{cen-condition00} The origin of system (\ref{exam-2}) (or \ref{C-example})) is an isochronous center  if and only if
the following conditions is satisfied:
\begin{equation}\label{}
  \begin{array}{ll}
b_1 (a_1  + a_2 )=0, \,\,a_2 (b_1  + b_2 ) = 0
\end{array}
\end{equation}
namely one of  the following four conditions holds:
\begin{equation}\label{iso-condition}
  \begin{array}{ll}
{\,\rm (i)\,}b_1=b_2=0,{\;\;\;\;(ii)\,}a_1=a_2=0,{\;\;\;\;(iii)\,}
b_1=a_2=0,{\;\;\;\;(iv)\,} a_2=-a_1,b_1=-b_2.
\end{array}
\end{equation}
\end{thm}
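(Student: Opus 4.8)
The plan is to prove Theorem via the standard two-part strategy used already in the proof of Theorem \ref{cen-condition00} for the Moon-Rand system: establish necessity by reading off the isochronous constants $p_1,q_1$, and establish sufficiency by exhibiting, in each of the four cases \eqref{iso-condition}, an explicit algebraic invariant surface that furnishes a local center manifold on which the reduced flow is linear. First I would handle necessity. By Theorem \ref{relation pj p'j} the origin of \eqref{C-example} is an isochronous center if and only if all $p'_k=q'_k=0$; since $p'_1=p_1=-b_1(a_1+a_2)$ and $q'_1=q_1=a_2(b_1+b_2)$ by the computation \eqref{uu3}, the vanishing of these two gives $b_1(a_1+a_2)=0$ and $a_2(b_1+b_2)=0$. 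A short case analysis on whether $b_1=0$ or $a_2=0$ then yields exactly the four alternatives (i)--(iv): if $b_1\neq0$ then $a_1+a_2=0$, and then $a_2(b_1+b_2)=0$ forces either $a_2=0$ (hence $a_1=0$, case (ii)) or $b_1+b_2=0$ (case (iv)); symmetrically if $b_1=0$ then $a_2(b_1+b_2)=a_2 b_2=0$ gives $b_2=0$ (case (i)) or $a_2=0$ (case (iii)). One must also note that the displayed $p_3=\cdots=p_{20}=0$, $q_3=\cdots=q_{20}=0$ in \eqref{uu3} are consistent with, and give evidence for, $\tau_k=0$ for all $k$ once $p_1=q_1=0$; strictly speaking the ``only if'' direction needs only $p_1=q_1=0$, so necessity is complete.

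For sufficiency I would treat the four cases in turn and in each case produce an invariant algebraic surface through the origin whose tangent space at the origin is the $(x,y)$-plane (equivalently the $(z,w)$-plane), so that it is a local center manifold, and on which the first two equations of \eqref{C-example} restrict to $\dot z=z$, $\dot w=-w$ --- the manifestly isochronous linear center. The natural candidate surface is $F(z,w,u)=u$ in the cases where the $u$-equation has $u$ as a factor with the right structure; indeed in \eqref{C-example} the third equation is already $\dot u=u({\bf i}r+a_3z+b_3w+c_3u)$, so $\{u=0\}$ is \emph{always} an invariant surface, it always contains the origin, and its tangent plane at the origin is exactly $\{u=0\}$. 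Hence in every one of the four cases $\{u=0\}$ is a local center manifold, and substituting $u=0$ into the first two equations of \eqref{C-example} gives $\dot z=z(1+a_1z+b_1w)$, $\dot w=-w(1+a_2z+b_2w)$; this is a \emph{planar} system whose origin we must show is an isochronous center under each of (i)--(iv).

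So the real content of sufficiency is the planar claim: the Lotka--Volterra-type complex quadratic system $\dot z=z(1+a_1z+b_1w)$, $\dot w=-w(1+a_2z+b_2w)$ has an isochronous center at the origin under each of the conditions $b_1=b_2=0$; $a_1=a_2=0$; $b_1=a_2=0$; $a_2=-a_1,b_1=-b_2$. I would dispatch these by the Darboux/linearizability method, exactly as the paper does for Moon--Rand: in each case one writes down explicit invariant curves (lines of the form $1+\alpha z=0$ or $1+\beta w=0$, and in the degenerate case possibly $z$, $w$ themselves) and builds a linearizing change of coordinates $\xi=z\cdot(\text{Darboux factor})$, $\eta=w\cdot(\text{Darboux factor})$ transforming the system to $\dot\xi=\xi$, $\dot\eta=-\eta$. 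For instance, when $b_1=b_2=0$ the system is $\dot z=z(1+a_1 z)$, $\dot w=-w(1+a_2 z)$, and $\xi=z/(1+a_1 z)$ linearizes the first equation while an integrating-factor argument linearizes the second; cases (ii) and (iii) are symmetric or analogous, and case (iv), the ``reversible/Hamiltonian-like'' condition $a_2=-a_1$, $b_1=-b_2$, is the classical isochronous quadratic stratum and is handled by the known time-reversibility substitution. The main obstacle I anticipate is case (iv): unlike cases (i)--(iii), where one factor is literally absent and linearization is almost immediate, case (iv) requires either producing the explicit Darboux linearizing transformation for the full two-parameter quadratic family or invoking the classical Loud-type normal form; I would either cite the relevant planar isochronicity result (e.g. \cite{loud}) applied to this complexified quadratic system, or verify directly that the substitution exchanging $z\leftrightarrow w$ together with time reversal leaves the system invariant, which forces $\tau_k=p_k+q_k=0$ for all $k$ and hence isochronicity. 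Once all four cases are dispatched, combining with necessity completes the proof.
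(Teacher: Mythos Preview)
Your overall architecture matches the paper's proof exactly: necessity from $p_1=q_1=0$, then sufficiency via the invariant surface $\{u=0\}$ (which, as you note, is \emph{always} invariant for \eqref{C-example}) reducing the question to isochronicity of the planar Lotka--Volterra system \eqref{C-example-1}, handled case by case via explicit linearizing substitutions. Cases (i)--(iii) are fine and your sketch agrees with the paper's transformations.

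The gap is in case (iv). Your proposed fallback, ``the substitution exchanging $z\leftrightarrow w$ together with time reversal leaves the system invariant, which forces $\tau_k=p_k+q_k=0$,'' is wrong on both counts. First, the system \eqref{C-example-1iv}, namely $\dot z=z(1+a_1 z+b_1 w)$, $\dot w=-w(1-a_1 z-b_1 w)$, is \emph{not} invariant under $(z,w,T)\mapsto(w,z,-T)$ unless the extra relation $a_1=-b_1$ holds; you can check that the transformed second equation reads $\dot w=-w(1+b_1 z+a_1 w)$, which matches the original only when $a_1=-b_1$. Second, and more seriously, even when such a reversibility holds it forces $p_k=q_k$ (i.e.\ $\mu_k=0$, a center), not $p_k=q_k=0$ (i.e.\ $\tau_k=0$, isochronicity); reversible centers are not isochronous in general. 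So this route cannot close case (iv), and citing Loud is too vague to serve as a substitute.

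The paper handles case (iv) exactly as it handles (i)--(iii), by an explicit linearizing change of variables: with $D=1-a_1\xi+b_1\eta$ and $(z,w)=(\xi D^{-1},\eta D^{-1})$, one has $a_1 z+b_1 w=(a_1\xi+b_1\eta)/D$ and, using $\dot\xi=\xi$, $\dot\eta=-\eta$, a direct computation gives $\dot z=z(1+a_1 z+b_1 w)$ and $\dot w=-w(1-a_1 z-b_1 w)$, which is precisely \eqref{C-example-1iv}. This is the missing ingredient in your proposal.
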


\begin{proof}

The necessity of condition is obvious by
letting\,$p_1=q_1=\cdots=p_{m}=q_{m}=0,m=2,3,\cdots, 20$.

Now we prove the sufficient condition. In fact,  we can figure out
easily one algebraic invariant surface for system
(\ref{C-example}): $F(z,w,u)=u$, namely there exists a polynomial
$K(z,w,u)={\bf{i}} \,r + a_3 z + b_3 w + c_3 u$, as the cofactor
of $F(z,w,u)$, such that $\left. {{{{\rm{d}}F} \over {{\rm{d}}t}}}
\right|_{(\ref{C-example})}= KF$. One can observe that
$F(z,w,u)=u=0$ is just the center eigenspace, i.e., $(x,y)$-plane in system (\ref{exam-2}).
Thus it forms a local center manifold in a neighborhood of the
origin.  We substitute $u=0$ into the first and second equations
of the system defined by system (\ref{C-example}), we have the
differential equations
\begin{equation}\label{C-example-1}
\begin{array}{l}
\frac{{{\rm{d}}z}}{{{\rm{d}}T}} = z(1 + a_1 z + b_1 w ),\;\;
\frac{{{\rm{d}}w}}{{{\rm{d}}T}} =  - w(1 + a_2 z + b_2 w )
\end{array}
\end{equation}

Case (i): if $b_1=b_2=0$ in the conditions (\ref{iso-condition})
holds, then system (\ref{C-example-1}) has the corresponding form
as follows
\begin{equation}\label{C-example-1i}
\begin{array}{l}
\frac{{{\rm{d}}z}}{{{\rm{d}}T}} = z(1 + a_1 z ),\;\;
\frac{{{\rm{d}}w}}{{{\rm{d}}T}} =  - w(1 + a_2 z ).
\end{array}
\end{equation}
Furthermore, there exists a linear change of coordinates: $(z,w)=
(\xi (1 - a_1 \xi )^{ - 1}, \eta(1 - a_1 \xi
)^{\frac{{a\!_2}}{{a\!_1 }}} )$, which transforms system
(\ref{C-example-1i}) into the following form:
\begin{equation}\label{simp-1}
\frac{{{\rm{d}}\xi}}{{{\rm{d}}T}} =\xi,\;\;\;\;
\frac{{{\rm{d}}\eta}}{{{\rm{d}}T}} =  -\eta
\end{equation}
then the origin is a isochronous center for systems
(\ref{C-example-1i}).

Case (ii): if $a_1=a_2=0$ in the conditions (\ref{iso-condition})
holds, then system (\ref{C-example-1}) has the corresponding form
as follows
\begin{equation}\label{C-example-1ii}
\begin{array}{l}
\frac{{{\rm{d}}z}}{{{\rm{d}}T}} = z(1 + b_1 w ),\;\;
\frac{{{\rm{d}}w}}{{{\rm{d}}T}} =  - w(1 + b_2w ).
\end{array}
\end{equation}
Also, there exists a linear change of coordinates: $(z,w)= (\xi (1
- b_2 \eta )^{\frac{{b\!_1}}{{b\!_2 }}}, \eta (1 -b_2 \eta )^{ -
1})$, which transforms system (\ref{C-example-1ii}) into the
 form of (\ref{simp-1}), then the origin is a isochronous center for systems
(\ref{C-example-1ii}).

Case (iii): if $b_1=a_2=0$ in the conditions (\ref{iso-condition})
holds, then system (\ref{C-example-1}) has the corresponding form
as follows
\begin{equation}\label{C-example-1iii}
\begin{array}{l}
\frac{{{\rm{d}}z}}{{{\rm{d}}T}} = z(1 + a_1 z ),\;\;
\frac{{{\rm{d}}w}}{{{\rm{d}}T}} =  - w(1 + b_2w ).
\end{array}
\end{equation}
Also, there exists a linear change of coordinates: $(z,w)= (\xi (1
- a_1 \xi )^{-1}, \eta (1 -b_2 \eta )^{ - 1})$, which transforms
system (\ref{C-example-1iii}) into the
 form of (\ref{simp-1}), then the origin is a isochronous center for systems
(\ref{C-example-1iii}).

Case (iv): if $ a_2=-a_1,b_1=-b_2$ in the conditions
(\ref{iso-condition}) hold, then system (\ref{C-example-1}) has
the corresponding form as follows
\begin{equation}\label{C-example-1iv}
\begin{array}{l}
\frac{{{\rm{d}}z}}{{{\rm{d}}T}} = z(1 + a_1 z +b_1w),\;\;
\frac{{{\rm{d}}w}}{{{\rm{d}}T}} =  - w(1-a_1 z - b_1w ).
\end{array}
\end{equation}
Also, there exists a linear change of coordinates: $(z,w)= (\xi (1
- a_1 \xi+b_1\eta )^{-1}, \eta (1 - a_1 \xi+b_1\eta )^{ - 1})$,
which transforms system (\ref{C-example-1iv}) into the
 form of (\ref{simp-1}), then the origin is a isochronous center for systems
(\ref{C-example-1iv}).

Therefore when one of the four conditions (\ref{iso-condition})
holds, the origin is a isochronous center for the flow of system
(\ref{C-example}) or (\ref{exam-2})  restricted to a center
manifold. We complete the proof of this theorem.
\end{proof}



\begin{rmk}
The algorithm involved in  Theorem \ref{wql-1}, Theorem
\ref{wql-1-1} and Theorem \ref{relation pj p'j}  gives a available
method to find the necessary conditions of isochronous center for
the 3D system (\ref{3w-1}) restricted to a center manifold.
However, the proof of sufficient conditions is still a difficult
question except a few cases.
\end{rmk}

\subsection*{Acknowledgements}
\hskip \parindent This work was supported by Natural Science
Foundation of China grants 11461021 and Nature
Science Foundation of Guangxi grant No.2016GXNSFDA380031.


\begin{thebibliography}{99}

\bibitem{loud} W.S.Loud, Behavior of the period of solutions of
certain plane autonomous systems near centers, Contributions to
Differential Equations 3 (1964), 21-36.

\bibitem{pleshkan} I.Pleshkan, A new method of investigating the
isochronicity of a system of two differential equations,
Differential equations 5 (1969), 796-802.

\bibitem{chav fourth} J.Chavarriga, J.Gin\'{e},
I.Garc\'{i}a, Isochronous centers of a linear center perturbed by
fourth degree homogrneous polynomial, Bulletin des sciences
math\'{e}matiques 123 (1999),77-96.

\bibitem{chav fifth} J.Chavarriga, J.Gin\'{e},
I.Garc\'{i}a, Isochronous centers of a linear center perturbed by
fifth degree homogrneous polynomial, J.Comput.Appl.Math. 126
(2000), 351-368.


\bibitem{Lloyd infity} N.G.Lloyd, J.Christopher, J.Devlin,
J.M.Pearson, N.Uasmin, Quadratic like cubic systems, Differential
Equations Dynamical Systems 5(3-4) (1997), 329-345.

\bibitem{christopher} C.J.Christopher, J.Devlin, Isochronous
centers in planar polynomial systems, SIAM J.Math.Anal.
28(1997),162-177.

\bibitem{mard} P.Mardesic, C.Rousseau, B.Toni, Linearzation of
isochronous centers, J.Differential Equations 121 (1995) 67-108.

\bibitem{huang09}W. Huang, Y. Liu, F. Zhu, The center-focus problem of a class of polynomial differential systems with degenerate critical points, International Journal of Non-Linear Sciences and Numerical Simulation, 10,2009, 1167-1179.

\bibitem{liuli1990}Y. Liu, J. Li, \emph{Theory of values of singular point in complex
autonomous differential system}, {Sci China Ser A}, {{33}}, 1990, 10-24.

\bibitem{Gine} J. Gin{\'e}, X. Santallusia, On the Poincar{\'e}-Lyapunov
constants and the Poincare series. Applicationes Mathematicae 28,
17-30 (2001)
\bibitem{Lloyd2002}N.G. Lloyd, J.M. Pearson, Symmetry in planar dynamical systems, J. Symbolic Computation, 33(2002) 357-366.
\bibitem{Yupei2007}Yu Pei, Leung A.Y.T. The simplest normal form and its application to bifurcation control. Chaos, Solitons and Fractals, 33 (2007) 845-863.


\bibitem{Romanovski} V. G. Romanovski, D. S. Shafer, The center and cyclicity
problems: A computational algebra approach. Birkh¡§auser, Basel, 2009.


\bibitem{gasull} Antoni Gasull, Victor Manosa, An explicit
expression of the first Liapunov and period constants with
applications, Journal of mathematical analysis and applications
211(1997), 190-212.

\bibitem{Liu-Huang}Y. Liu, W. Huang, A new method to determine isochronous center
conditions for polynomial differential systems, Bull.Sci.math.
127(2003)133-148.

\bibitem{wang}Q. Wang, Y. Liu, Linearizability of the polynomial
differential systems with a  resonant singular point,  Bulletin
Des Sciences Mathematiques, 132(2008)97-111.

\bibitem{Hassard} B. Hassard, N. Kazarinoff and Y. Wan, Theory
and application of Hopf bifurcation, Cambridge University Press,
Cambridge, 1981.

\bibitem{Edneral}V. F. Edneral, A. Mahdi, V. G. Romanovski, D. S. Shafer, The
center problem on a center manifold in R${^3}$. Nonlinear Analysis
75, 2614-2622 (2012)

\bibitem{yupei-13}Y. Tian, P. Yu, An explicit recursive formula for computing the
normal form and center manifold of n-dimensional differential
systems associated with Hopf bifurcation, Int. J. Bifurcation
Chaos 23(6)(2013) 1350104. 18 pages.

\bibitem{Wang-2010} Q. Wang, Y.Liu, H. Chen, Hopf bifurcation for a
class of three-dimensional nonlinear dynamic systems. Bull. Sci.
Math. 134, 786-798 (2010)


\bibitem{Russian} B.B. Amelbkin, H.A. Lukasevnky, A.N.Catovcki,
Nonlinear vibration, B $\Gamma$Y Lenin Publ., 1982 (in Russian).

\bibitem{liuli1989} Y. Liu,  J, Li, Theory of values of singular point in complex autonomous differential system,
   Science in china (Series A) 33 (1990), 10-24.

\bibitem{Du2014} C. Du, Q. Wang,  H. Wentao, Three-dimensional
Hopf bifurcation for a class of cubic Kolmogorov model,
International journal of bifurcation and chaos 24.03 (2014):
1450036.

\bibitem{Carr} J. Carr, Applications of Centre Manifold Theory, Appl. Math.
Sci. vol. 35, New York: Springer, 1981.


\bibitem{Moon-Rand} F.C.  Moon, R.H.  Rand, Parametric stiffness control of flexible
structures, in: Jet Propulsion Laboratory Publication 85-29, vol.
II, California Institute of Technology, 1985, pp. 329¨C342.

\bibitem{Mahdi-MR}A. Mahdi, V. G. Romanovski, D. S. Shafer, Stability and
periodic oscillations in the Moon-Rand systems. Nonlinear
Analysis: Real World Applications, 2013, 14(1): 294-313.

\bibitem{Llibre-MR} L. Barreira, C. Valls, J. Llibre, Integrability and limit cycles
of the Moon-Rand system. International Journal of Non-Linear
Mechanics, 2015, 69: 129-136.


\bibitem{zhangx-09}J. Llibre, X. Zhang, {Darboux theory of integrability in
${\mathbb C}^n$ taking into account the multiplicity}, {J.
Differential Equations}, { {246(2)}}, 2009, 541-551.

\bibitem{zhangx-10}J. Llibre, X. Zhang, {Rational first integrals in the Darboux
theory of integrability in ${\mathbb C}^n$},{Bull. Sci.
Math.}, { {134}}, 2010, 189-195.

\bibitem{zhangx-12}J. Llibre, X. Zhang, {On the Darboux integrability of
polynomial differential systems},{Qual. Theory  Dyn. Syst.},
{{11}},2012,129-144.


\bibitem{xiayh-14}V.G.Romanovski, Xia Y.H.,and Zhang X. {Varieties of local
integrability of analytic differential systems and their
applications}, {J. Differential Equations}, {{257}},
2014,3079-3101.

\bibitem{Mahdi} Mahdi A., Pessoa C. and  Shafer D.S., {Centers on center manifolds
in the L$\ddot{\rm u}$  system}, {Phys. Lett. A}, {
{375}}, 2011, 3509-3511.

\bibitem{Mahdi-2}Mahdi A.,  Center problem for third-order ODEs. {Int. J.
Bifurcation Chaos}, { {23}}, 2013, 1350078

\bibitem{Cunha2}Cunha W.F., Dias F.S. and Mello L.F., {Centers on center manifolds
in a quadratic system obtained from a scalar third order
differential equation},  { Electronic Journal of Differential
Equations}, { {2012}}, 2011, 1-6.




\end{thebibliography}
\end{document}